	\newtheorem{theo}{Theorem}[section]
	\newtheorem{example}[theo]{Example}
	\newtheorem{lemma}[theo]{Lemma}
	\newtheorem{corollary}[theo]{Corollary}
\def\RR{{\ifmmode{\mathbbmss{R}}\else{$\mathbbmss{R}$}\fi}}
\def\NN{{\ifmmode{\mathbbmss{N}}\else{$\mathbbmss{N}$}\fi}}
\def\ZZ{{\ifmmode{\mathbbmss{Z}}\else{$\mathbbmss{Z}$}\fi}}
\title{Transposition game}
\author{\'Elise Janvresse, Steve Kalikow, Thierry de la Rue}
\address{\'Elise Janvresse, Thierry de la Rue:
Laboratoire de Math\'ematiques Rapha\"el Salem, 
Universit\'e de Rouen, CNRS -- 
Avenue de l'Universit\'e -- 
76801 Saint \'Etienne du Rouvray, France.}
\email{Elise.Janvresse@univ-rouen.fr\\Thierry.de-la-Rue@univ-rouen.fr}
\address{Steve Kalikow: University of Memphis, Department of Mathematics, Dunn Hall, 3725 Norriswood, Memphis, TN 38152, USA.}
\email{skalikow@memphis.edu}
\begin{document}
\subjclass[2000]{91A46}
\keywords{2-person game, parity of a permutation}
\maketitle

\bibliographystyle{plain}
\begin{abstract}
We introduce a two-player game, in which each player extends a given sequence by picking a free element in a domain $D\subset \RR$. 
The aim of the players is to control the parity of the number of transpositions necessary to put the final sequence in order.
We will see that the winner can be the last player, the second last player, the first player, the second player, the person who wants the parity to end up even or the person who wants the parity to end up odd. A special case of the game can be reduced to a game with nontrivial winning strategy, but describable in so simple a way that children can understand it and enjoy playing it. 
\end{abstract}

\section{Introduction}
Combinatorial games have aroused considerable interest, and gave rise to a vast mathematical literature (see e.g.~\cite{beck, berlekamp, conway}). Our purpose here is to introduce a new game, involving two elementary mathematical notions (ordered set, parity of a permutation), for which we give a complete analysis.

\subsection*{Description of the game}
Let $D$ be a finite or infinite subset of the real numbers, called the domain. 
A finite sequence of $m$ distinct real numbers in $D$ is given, as well as a positive integer $n$. 
We assume the number of elements of the domain $D$ is at least $m+n$. 

We play a game:
I go first. I extend the sequence by one term by adding an element of the domain that is not already in the sequence. Then you extend the sequence again by adding an element of the domain that is not already in the sequence. Then I go, then you go etc. until the sequence is extended to $m+n$ terms. 
If the number of transpositions necessary to put the final sequence in order is even then I win (or my object could be to make the number of transpositions necessary to be odd.) The purpose of this paper is to tell in every case who wins.

In most situations, the winning strategy relies on the central notion of pivot (see below). However, the winner strongly depends on the size and the structure of the domain $D$. We will see that all situations are possible: The winner can be the last player, the second last player, the first player, the second player, the person who wants the parity to end up even or the person who wants the parity to end up odd. 

The special case where the domain is finite of size $m+n+1$ requires a quite different analysis, and gives rise to a nontrivial winning strategy. This particular case will be coded with numbers, but can actually be played by a child who has virtually no concept of a number if he has simple objects available such as toothpicks or pennies. In the end, an interpretation of this case will be exhibited. 

We proceed the analysis of the game backwards, therefore it is convenient to call \textit{Black} the player who plays the last turn of the game and \textit{White} the player who plays the second last turn of the game. Therefore if $n$ is odd, Black is also the first player, whereas if $n$ is even, the first player is White.

\subsection*{$S$-blocks and pivots}

Let $S$ be the set of all numbers already in the sequence (which grows at each turn).
We introduce the notion of $S$-blocks by considering the equivalence relation $\sim_S$ given by:
$$
\mbox{For any } a, b\in S,   a \sim_S b \Longleftrightarrow [a, b]\cap D \subset S.
$$
We call \textit{$S$-block} the equivalence class for $\sim_S$.
An element in $D\setminus S$ is called a \textit{free} element.
A \textit{pivot} is an $S$-block containing an odd number of elements such that there exist two free elements, one of which is smaller and the other one is larger than the elements of the $S$-block.

In the paper, the parity of the number of transpositions necessary to put a finite sequence of distinct real numbers in order is called the \textit{parity of the sequence} (this is of course defined modulo~2).

\subsection*{Defining theorem and corollaries}
The following theorem is well known, but we present it anyway since it leads to useful corollaries for our analysis.

\begin{theo}\label{bla1}
Let $a_1,a_2,\dots, a_n$ be a sequence of distinct real numbers. The parity of the sequence is even (respectively odd) if 
$\sum_{i=1}^n |\{j<i: a_j>a_i\}|$ is equal to 0 (respectively 1) modulo 2.
\end{theo}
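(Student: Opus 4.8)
The plan is to identify the double sum with the number of inversions of the sequence and then to show that a single transposition always flips the parity of this number. Writing $I(a):=\sum_{i=1}^n|\{j<i:a_j>a_i\}|$, one sees immediately that $I(a)$ counts exactly the pairs of positions $(j,i)$ with $j<i$ and $a_j>a_i$, i.e. the inversions of the sequence. Since the increasing (sorted) arrangement is the unique one with $I=0$, it will suffice to prove that exchanging the entries in any two positions changes $I$ by an odd amount: then any chain of $T$ transpositions carrying $a$ to its sorted form satisfies $I(a)\equiv T\pmod 2$, which simultaneously shows that the parity of $T$ does not depend on the chain chosen and identifies it with $I(a)\bmod 2$.

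The heart of the argument is this local computation. I would fix positions $p<q$, let $b$ be obtained from $a$ by exchanging $a_p$ and $a_q$ and leaving every other entry fixed, and compare $I(a)$ with $I(b)$ pair by pair. Pairs of positions that avoid both $p$ and $q$ contribute identically to $I(a)$ and $I(b)$. For an index $k$ with $k<p$ or $k>q$, the two pairs it forms with $p$ and with $q$ merely get their roles swapped, so their joint contribution is unchanged. For an index $k$ with $p<k<q$, the pairs $(p,k)$ and $(k,q)$ contribute $\mathbf 1\{a_p>a_k\}+\mathbf 1\{a_k>a_q\}$ to $I(a)$ and $\mathbf 1\{a_q>a_k\}+\mathbf 1\{a_k>a_p\}$ to $I(b)$; since the $a_i$ are distinct, $\mathbf 1\{a_q>a_k\}=1-\mathbf 1\{a_k>a_q\}$ and $\mathbf 1\{a_k>a_p\}=1-\mathbf 1\{a_p>a_k\}$, so this joint contribution changes by an even amount. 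Finally the pair $(p,q)$ itself passes from $\mathbf 1\{a_p>a_q\}$ to $\mathbf 1\{a_q>a_p\}=1-\mathbf 1\{a_p>a_q\}$, an odd change. Summing over all cases, $I(b)-I(a)$ is odd, as claimed.

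Combining the two steps yields the theorem: the parity of $I(a)$ coincides with the parity of the number of transpositions needed to reach the sorted sequence, which is by definition the parity of the sequence, so it is even when $I(a)\equiv 0$ and odd when $I(a)\equiv 1\pmod 2$. The only delicate point is the bookkeeping in the local computation, namely checking that indices lying outside the interval of positions $[p,q]$ and indices lying strictly inside it \emph{both} contribute even changes, so that the sole odd contribution comes from the transposed pair itself. The mechanism making the intermediate contributions even rather than merely bounded is the repeated use of the distinctness hypothesis through the identity $\mathbf 1\{x>y\}+\mathbf 1\{y>x\}=1$; I expect this case distinction to be the main obstacle, everything else being a direct count.
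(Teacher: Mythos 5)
Your proof is correct, but it takes a different route from the paper's. The paper argues incrementally, in insertion-sort style: its one-line proof observes that appending a new element $a_{m+1}$ to $(a_i)_{1\le i\le m}$ changes the parity of the number of transpositions needed to sort exactly when the number of earlier elements larger than $a_{m+1}$ is odd, and the formula $\sum_{i=1}^n |\{j<i: a_j>a_i\}| \bmod 2$ then falls out by induction on the length. You instead work globally: you identify the sum with the inversion count $I(a)$ and prove, by the classical case analysis on indices $k<p$, $p<k<q$, $k>q$, that an arbitrary (not necessarily adjacent) transposition changes $I$ by an odd amount, whence $I(a)\equiv T \pmod 2$ for any chain of $T$ transpositions reaching the sorted order. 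Your local computation checks out; the bookkeeping you flag as the delicate point is handled correctly via $\mathbf 1\{x>y\}+\mathbf 1\{y>x\}=1$ for distinct reals. What each approach buys: the paper's incremental observation is exactly the tool reused throughout the game analysis (Corollaries~\ref{bla2} and~\ref{bla4} hinge on how one appended element shifts the parity), so stating the proof that way makes the corollaries immediate; your version is longer but more self-contained, since it additionally establishes that the parity of the number of transpositions is well defined --- independent of which chain of transpositions one uses --- a fact the paper tacitly borrows from the ``well known'' theory of the sign of a permutation.
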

\begin{proof}
Observe that adding a new element $a_{m+1}$ to the sequence $(a_i)_{1\le i\le m}$ changes the parity of the number of transpositions necessary to put the new sequence in order if and only if the number $\sum_{i=1}^m 1_{\{a_i >a_{m+1} \}}$ of elements larger than $a_{m+1}$ is odd. 
\end{proof}

The \textit{end $S$-block} (respectively \textit{beginning $S$-block}) is the $S$-block containing the last (respectively first) term of the domain $D$. If there is no such block, which includes the possibility that there is no last (respectively first) term of $D$, then we say that the end $S$-block (respectively beginning $S$-block) is empty and hence that the number of terms in that block is 0.

\begin{corollary}\label{bla2}
If there is no pivot, then whoever plays next adds the number of elements of the end $S$-block to the parity of the sequence, no matter where he plays.
\end{corollary}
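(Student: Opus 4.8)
The plan is to reduce the statement to the counting observation already established in the proof of Theorem~\ref{bla1}, and then to read off the relevant count from the block structure. Let $x$ denote the free element chosen by the player to move, appended as the new last term of the sequence. By the observation in the proof of Theorem~\ref{bla1}, appending $x$ changes the parity of the sequence by $|\{a\in S: a>x\}|$ modulo $2$. It therefore suffices to show that, in the absence of a pivot, this quantity has the same parity as the number of elements of the end $S$-block, and that this holds for \emph{every} choice of free $x$ (which is what ``no matter where he plays'' means).

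First I would observe that, because $x$ is free, no $S$-block can straddle $x$: if an $S$-block contained elements $a<x<b$, then $x\in[a,b]\cap D$ would be a free element lying strictly between two members of the block, contradicting $a\sim_S b$. Hence every $S$-block lies entirely above or entirely below $x$, and $|\{a\in S:a>x\}|=\sum_{B>x}|B|$, the sum ranging over those $S$-blocks $B$ all of whose elements exceed $x$. Since the sequence is finite at every stage of the game, $S$ is finite, so there are only finitely many blocks and each is finite; this finiteness will be used below to produce free elements above a block.

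The key step is to control the parity of this sum with the no-pivot hypothesis. I claim that every block $B$ above $x$ other than the end $S$-block is an \emph{interior} block, i.e. has a free element both below and above it. Below $B$ we already have the free element $x$. For a free element above $B$, suppose there were none; then every element of $D$ above the top of $B$ would belong to $S$, and since $S$ is finite the largest of them would be the greatest element $\max D$ of $D$, with no free element separating it from $B$, forcing $\max D\in B$ and making $B$ the end $S$-block, a contradiction. Thus $B$ has free elements on both sides, so $B$ is a pivot candidate, and since there is no pivot it must have even cardinality. Consequently every block above $x$ except possibly the end $S$-block contributes evenly to the sum, giving $\sum_{B>x}|B|\equiv |\text{end }S\text{-block}|\pmod 2$.

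Finally I would verify the two boundary situations that fix the value $|\text{end }S\text{-block}|$ itself. If the end $S$-block is nonempty it contains the largest element of $D$ and has no free element above it, so the free $x$ necessarily lies below it and its full size is counted, while the remaining blocks above $x$ are interior and even; the sum is then congruent to the size of the end $S$-block. If the end $S$-block is empty, there is a free element above every block, so even the topmost block lying above $x$ is interior and hence even, the whole sum is even, and this matches the value $0$ assigned to the empty end block. The main obstacle is exactly this bookkeeping at the extremes: one must take care that the beginning and end blocks are allowed to have odd size without producing a pivot, and confirm, using the finiteness of $S$, that a block failing to be the end $S$-block always has a free element above it.
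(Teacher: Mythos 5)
Your proof is correct and follows the argument the paper intends but leaves implicit (the corollary is stated without proof): reduce to the count of already-placed elements above the played point via Theorem~\ref{bla1}, note that no block straddles a free element, and use the no-pivot hypothesis to force every block above the played point other than the end $S$-block to be even, since each such block has a free element on both sides. The boundary cases you check (end block nonempty versus empty, existence of a free element above a non-end block via finiteness of $S$) are exactly the details that need verifying, and you handle them correctly.
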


Because of Corollary~\ref{bla2}, it is of interest to know what the parity is about to be and what the parity plus the number of elements of the end $S$-block is about to be.

\begin{corollary}\label{bla4}
If there is a pivot, the next player can control both what the parity is about to be and what the parity plus the number of terms in the end $S$-block is about to be. Moreover, if there exist both a smallest free element larger than the pivot, and a largest free element smaller than the pivot, this can be done while removing the pivot.
\end{corollary}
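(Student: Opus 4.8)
The plan is to reduce every move to the single computational fact established in the proof of Theorem~\ref{bla1}: adjoining a number $x$ to the current sequence changes the parity by $|\{a\in S: a>x\}|\pmod 2$, a quantity that depends only on the rank of $x$ among the elements already played. So I would attach to each candidate free element $x$ just two numbers: its parity contribution $|\{a\in S:a>x\}|\bmod 2$, and the change it produces in the size of the end $S$-block. Controlling ``the parity'' and ``the parity plus the end-block size'' then amounts to realizing both values of each of these two output bits.

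First I would set up the two families of moves furnished by the pivot. Let $B$ be the pivot, let $R$ be the number of elements of $S$ lying above $B$, and let $s^{+}$ be the smallest element of $S$ above $B$ (if any). Since $B$ is a maximal $S$-block flanked by free elements, both the gap just below $B$ and the gap $(B,s^{+})$ just above $B$ contain a free element. A free element placed in the lower gap has exactly $|B|+R$ elements of $S$ above it, whereas one placed in the upper gap has exactly $R$, because no element of $S$ lies strictly between $B$ and either gap. As $|B|$ is odd, the two moves yield opposite parities, which already gives control of the parity.

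The heart of the argument is to verify that these same two moves leave the \emph{parity} of the end-block size unchanged, so that across them both values of the parity and both values of the parity-plus-end-block-size are attained. I would split on the top of the domain. If $\max D$ does not exist, or exists but is free, the end block is empty and stays empty under any placement other than $\max D$ itself; and in the one sub-case where a pivot move is forced to play $\max D$ (when $\max D$ is the only free element above $B$ and no element of $S$ lies above $B$), it creates an end block of size $|B|+1$, an even number, so the end-block parity is unchanged. If $\max D\in S$, then the end block $E$ is nonempty and $B$ must lie strictly below $E$ (a pivot has a free element above it, hence cannot contain $\max D$). A lower-gap move then sits below $E$ and does not touch it, while an upper-gap move can be chosen to keep a free element between $B$ and $E$ (so $E$ is untouched), the only exception being when a single free element separates $B$ from $E$, in which case playing it merges $B$ into $E$ and changes $|E|$ by $|B|+1$, again even. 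In every configuration the end-block size changes by an even amount, so the parity-plus-end-block-size changes exactly by the parity contribution, and is therefore controllable along with the parity.

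Finally, for the ``moreover'' I would take the two control moves to be the largest free element below $B$ and the smallest free element above $B$, whenever these exist; they are adjacent to $B$, so playing either one is absorbed into $B$, merging the odd block $B$ into a strictly larger block and thus removing this pivot, while the parity and end-block computations are exactly those above. The step I expect to be the main obstacle is the end-block bookkeeping: one must confirm that the change in $|E|$ is even in every configuration of the top of the domain, paying particular attention to the merging case and to dense domains in which there is no largest free element below $E$ (so that $E$ cannot be enlarged by a single move at all). This verification is where a careful, if routine, case check is unavoidable.
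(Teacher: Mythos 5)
The paper states this corollary without proof, so there is no official argument to compare yours against; your reduction to the observation in the proof of Theorem~\ref{bla1} (adjoining $x$ changes the parity by $|\{a\in S:a>x\}|$) and the choice between a free element in the gap just below the pivot and one in the gap just above it is surely the intended route. The existence of those two gap elements, the computation $|B|+R$ versus $R$, and the verification that every such move changes the size of the end $S$-block by an even amount (so that flipping the parity automatically flips the parity plus the end-block size) are all correct; the case split on whether $\max D$ exists and belongs to $S$, with the two merging cases contributing $|B|+1$, is exactly the bookkeeping that needs to be done.

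The one genuine weak point is in the ``moreover''. From ``merging the odd block $B$ into a strictly larger block'' you conclude ``thus removing this pivot'', but a strictly larger block can again be odd and central, i.e.\ still a pivot. Concretely, take $D=\{0,1,\dots,10\}$ and $S=\{1,3\}$ with $B=\{3\}$: the largest free element below $B$ is $2$, and playing it produces the block $\{1,2,3\}$, which is odd and flanked by the free elements $0$ and $4$ --- a pivot sitting exactly where $B$ was. What rescues the statement is that the absorbed neighbouring block can make the merged block odd and central only if it is itself odd, has a free element below it, and (since the played element $u$ is free and lies above it) a free element above it --- that is, only if it was \emph{already} a pivot. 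So the move always strictly decreases the number of pivots, and in every situation where the paper invokes the removal there is a unique pivot, in which case this obstruction cannot occur and the resulting position really is pivot-free. You need to add this observation (or restrict the ``moreover'' to the single-pivot setting in which it is used); as written, ``strictly larger, hence no longer a pivot'' is a non sequitur.
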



\section{Finite case}
\label{Finite}
If the domain $D$ has size $d$, then it is isomorphic to $\{1, \dots , d\}\subset \ZZ_+$, so we can assume that $D=\{1, \dots , d\}$. 
Recall that, before the game begins, the set $S$ is of cardinality $m$ and the number $n$ of turns is such that $d\ge m+n$. The case $d=m+n+1$ is the special case where the game can be reduced to a children's game (see the annex).

In this setting, an $S$-block is a set of consecutive integers $\{\ell, \dots , \ell+j-1\}$ in $S$ such that both $\ell-1$ and $\ell+j$ do not belong to $S$. 
If $j$ is odd (respectively even), the $S$-block is said to be odd (respectively even).

\subsection{Case $d\not=m+n+1$}

\begin{theo}\label{FiniteCase}
If $d=m+n$, White wins. 

If $d>m+n+1$ and, in the initial sequence
	\begin{itemize}
	\item there is no pivot, White wins.
	\item there is exactly one pivot, the first player wins.
	\item there is more than one pivot, Black wins.
	\end{itemize}
\end{theo}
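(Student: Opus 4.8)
The plan is to analyse the game backwards, computing for every position the set $\mathrm{win}(pos)\subseteq\{\text{even},\text{odd}\}$ of final parities that the player to move can force, as a function of the number $k$ of moves still to be played and the number $p$ of pivots present. The recursion is the usual one for a determined two-outcome game: the mover can force a parity $x$ exactly when some legal move reaches a position $pos'$ with $\mathrm{win}(pos')\subseteq\{x\}$, so that the opponent cannot then force the opposite parity. I will read the three conclusions off the four possible values: $\{\text{even},\text{odd}\}$ means the current mover controls the parity, $\emptyset$ means the next mover controls it, and a singleton means the parity is already determined. Note that a move to an $\emptyset$-position lets the mover force \emph{both} parities, and recall that the mover at $k=n$ is the first player (Black if $n$ is odd, White if $n$ is even).

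The case $d=m+n$ is separate and immediate. When White is to move there remain exactly two free elements, after which Black has no choice; writing the two free values as $a$ and $b$ and applying Theorem~\ref{bla1}, I check that playing $a$ then $b$ and playing $b$ then $a$ give final parities differing by the inversion term $1_{a>b}-1_{b>a}\equiv 1$, so White's choice fixes the parity whatever White's aim. Since this ignores all earlier moves, White wins.

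For $d\ge m+n+2$ the backbone is a pivot-counting lemma: a single move changes $p$ by at most one in each direction (a move merges, extends or creates at most one block near the played element, and deleting that element from the free set can strip the pivot status of at most one further block; both effects are settled by a short case analysis). Moreover, since the domain is finite, every nonempty set of free elements has a least and a greatest element, so the side condition of Corollary~\ref{bla4} always holds: whenever a pivot exists the mover has full control of (parity, parity$+$end-block size) and may simultaneously delete that pivot. With this I prove simultaneously, by induction on $k$, three invariants: if $p\ge 2$ then Black controls the parity; if $p=1$ then the mover controls it; and if $p=0$ and $k\ge 2$ then White controls it (the degenerate case $n=1$, where there is no White, being excluded). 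The inductive moves are explicit and use the abundance of free elements forced by $d\ge m+n+2$ (at least $d-m-n+k\ge k+2$ of them with $k$ moves left): from $p\ge 2$ the favoured player keeps a \emph{neutral} move preserving $p\ge 2$, which by induction is an $\emptyset$-position; from $p=1$ the mover either deletes the pivot (reaching $p=0$) or creates a second pivot (reaching $p\ge 2$), choosing whichever lands in an $\emptyset$-position — the choice depending only on the parity of $k$, with Corollary~\ref{bla4} fixing the parity data at the base case $k=2$; from $p=0$ the mover again plays neutrally to keep $p=0$. Evaluating the invariants at $k=n$ gives the three advertised outcomes.

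The main obstacle is the base case $p=0$, $k=2$, on which the entire $p=0$ tower rests. Here White has no pivot, so by Corollary~\ref{bla2} White's move adds the current end-block size to the parity no matter where White plays; the final parity therefore equals this fixed value plus the parity of the \emph{new} end-block size $e'$, which is governed entirely by White's move. I must show White can realise both parities of $e'$ without creating a pivot (which would hand control to Black): playing just below the end block toggles $e'$, while a harmless move away from the top leaves it unchanged, and a case analysis according to whether $d\in S$ and to the arrangement of the free elements just below the end block — using that there are at least four free elements — shows both parities are attainable by pivot-free moves. The remaining verifications are precisely the assertions that a suitable neutral, pivot-creating, or pivot-deleting move exists at each inductive step, and each follows from the same lower bound on the number of free elements.
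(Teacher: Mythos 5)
Your proposal is correct and follows essentially the same route as the paper: the $d=m+n$ case is handled by White's penultimate choice between the two remaining elements, and the $d\ge m+n+2$ case by the same pivot strategy (Black manufactures pivots and invokes Corollary~\ref{bla4} on his last turn; White destroys pivots or plays the extremal free elements to control the end-block contribution of Corollary~\ref{bla2}), with your three invariants in $p$ being exactly the content of the paper's lemma about Black's second-last turn, just packaged as an explicit backward induction. The only substantive addition is making the bound ``$p$ changes by at most one per move'' an explicit lemma, which the paper uses implicitly via ``White cannot remove them both''; both texts defer comparable routine verifications about the existence of neutral and pivot-creating moves.
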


\begin{proof}[Proof of Theorem~\ref{FiniteCase} ($d=m+n$)]
Let us consider White's last turn: There are exactly two elements of the domain that are not already in the sequence.
By choosing one or the other, White can control the parity of the number of transpositions necessary to put the sequence in order. Therefore, White always wins.
\end{proof}

In order to prove the second part of Theorem~\ref{FiniteCase}, we need the following lemma.
\begin{lemma}
Assume $d>m+n+1$ and consider Black's second last turn. If there is no pivot, White wins. Otherwise, Black wins.
\end{lemma}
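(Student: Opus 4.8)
My plan is to run a three-ply backward analysis of the last three moves, which are Black, White, Black, and to reduce everything to a count of pivots. Throughout I will use that $d>m+n+1$ leaves at least five free elements at Black's second last turn, hence at least three at each subsequent move, so that every pivot keeps free elements on both sides and the pivot-removing clause of Corollary~\ref{bla4} is always available. The backbone is the following intermediate statement, which I would establish first.

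\medskip
\emph{Claim (White's last turn).} At the beginning of White's last turn, White wins if there is at most one pivot, and Black wins if there are at least two.
\medskip

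I would prove the Claim in two parts. Suppose first there are at least two pivots. The essential combinatorial fact is that a single move removes at most one pivot: playing one element changes the parity of only the block it joins, after merging it with its at most two neighbours, and it consumes a single free element, which (as one checks) cannot be the outermost free element on a given side for two distinct pivots at once. Hence after White's move at least one pivot survives, and on the last move Black applies Corollary~\ref{bla4} to set the final parity to whatever he wishes, so Black wins. Suppose now there is at most one pivot. I claim White can reach a pivot-free position realizing the final parity she wants, after which Corollary~\ref{bla2} forces Black's last move and she wins. With exactly one pivot this is Corollary~\ref{bla4}: White removes the pivot while prescribing both the parity and the parity-plus-end-block, choosing them so that the amount Black is then forced to add lands on her target. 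With no pivot I would instead exhibit two pivot-free moves with opposite outcomes: by Corollary~\ref{bla2} every move of White changes the current parity by the same amount, so the final parity is governed only by the size of the end $S$-block that Black will be forced to add; playing the free element immediately below the end block extends it and flips that size, while a move leaving the end block untouched (for instance playing the smallest element of $D$, or extending the beginning $S$-block) keeps it, and neither move creates a pivot. White selects whichever move produces her target parity.

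With the Claim available, the lemma follows by one more ply. If there is no pivot at Black's second last turn, then from a pivot-free position a single move creates at most one odd interior block and destroys none, so afterwards there is at most one pivot and White wins by the Claim. If there is at least one pivot, I would have Black move to a position with at least two pivots and then invoke the Claim: keeping the existing pivot intact, Black plays a free element that is interior to $D$ and flanked by a free element on either side, chosen far enough from the existing pivot that it neither merges into it nor reaches a boundary block; this produces a new odd singleton block with free elements on both sides, that is, a second pivot.

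I expect the decisive difficulty to be this last construction, namely proving that whenever a pivot exists Black can always manufacture a second one. One has to exclude the tight configurations in which the free elements cluster against $S$ or against the ends of $D$, and verify that a suitable \emph{isolated} interior free element is always available; this is exactly where the hypothesis $d>m+n+1$, rather than $d=m+n$ or $d=m+n+1$, is used. A smaller but related point, to be checked in the one-pivot half of the Claim, is that the move supplied by Corollary~\ref{bla4} genuinely leaves no pivot behind, so that Black is indeed forced on the final move; this too rests on the presence of free elements immediately above and below the pivot.
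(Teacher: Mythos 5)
Your overall architecture (a three-ply backward induction, the intermediate claim that at White's last turn White wins with at most one pivot and Black with at least two, and the use of Corollaries~\ref{bla2} and~\ref{bla4}) is the same as the paper's, and the no-pivot branch and the ``one move kills at most one pivot'' count are fine at the paper's own level of detail. The genuine problem is the step you yourself single out as decisive: manufacturing a second pivot when one already exists. Your construction requires a free element both of whose $D$-neighbours are free, so that playing it yields an odd \emph{singleton} block, and you propose to finish the proof by verifying that such an element always exists under $d>m+n+1$. It does not. Take $D=\{1,\dots,14\}$ and $S=\{1,3,4,6,8,9,11,12,14\}$ at Black's second last turn (so $m+n=12$ and $d=14>m+n+1$): there are five free elements $2,5,7,10,13$, exactly one pivot $\{6\}$, and every free element has both $D$-neighbours in $S$, so no move creates a singleton block. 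Hence the verification you defer to is impossible, and the proof as proposed cannot be completed along that route.

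The repair is to create the second pivot by a merge rather than by isolation. Because there is at most one pivot, every \emph{interior} $S$-block (one with free elements on both sides) other than the pivot is even; so playing a free element lying strictly between two such interior blocks (either or both possibly empty), with free elements left over on both sides of the resulting block, produces an odd block flanked by free elements, i.e.\ a new pivot, without touching the flanks of the old one. A short count shows such a move always exists: of the at least five free elements, at least three lie on one side of the existing pivot, and playing the second one from the pivot on that side does the job. In the example above Black plays $10$, merging $\{8,9\}$ and $\{11,12\}$ into the odd block $\{8,\dots,12\}$ flanked by the free elements $7$ and $13$. This is the content hidden behind the paper's parenthetical remark about the five remaining free elements, and it is the piece your argument is missing.
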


\begin{proof}
Suppose there is no pivot. 
If Black creates one, he loses by Corollary~\ref{bla4}. 
If Black does not create a pivot, he loses anyway: White attaches either to the beginning or end $S$-block, thereby controlling the parity plus the number of elements of the end $S$-block. This enable White to control the parity after Black plays by Corollary~\ref{bla2}.

If there is a pivot, then Black can create another pivot (note that at Black's second last turn, there are at least five elements of the domain that are not already in the sequence). 
Since White cannot remove them both, Black can control the parity at the end of the game by Corollary~\ref{bla4}. 
%
%
%
\end{proof}

Eventually, we see from this analysis that the aim of White is to have no pivot at Black's second last turn, whereas the aim of Black is to have at least one. We are now ready to prove the second part of Theorem~\ref{FiniteCase}.

\begin{proof}[Proof of Theorem~\ref{FiniteCase} ($d>m+n+1$)]
Assume there is no pivot in the initial sequence. White can always arrange such that there is no pivot before Black's turn:
White removes the pivot whenever Black creates one, or attaches to the beginning or end $S$-block. 
Thus White wins by Corollaries~\ref{bla2} and~\ref{bla4}.

Assume there are more than two pivots: Black can always create a new pivot and White cannot remove all of them, so Black wins. 

Assume now there is exactly one pivot, and let us describe the players' strategy.
If $n$ is even, the first player is White, who can remove the pivot. We are left with a situation without pivot, so White wins.
On the other hand, if $n$ is odd, the first player is Black, who can always create a new pivot. We are left with a situation with two pivots, so Black wins.
Therefore, in any case, the first player wins.
\end{proof}

When the size of the domain is strictly larger than $m+n+1$, the general rule of thumb in presence of pivots is that Black wins. The reason for this is that on Black's turn he can always create another pivot and if on his last turn there is a pivot remaining, he wins. White is kept busy killing pivots and cannot take time to alter a specific configuration that he might want to alter. 
This strategy can also be applied in the infinite case (Sections~\ref{infiniteZ} and~\ref{infiniteR}).

\subsection{Case $d=m+n+1$}
\label{Sec:finite_d=m+n+1}
What distinguishes the case $d=m+n+1$ from the case $d>m+n+1$ is that Black cannot always create another pivot at his second last turn because there is not always room to do it.
We now give a seemingly different game and prove it to be equivalent to our initial game. 
One of the interests of the new game is that no knowledge in mathematics is needed to play and enjoy it. We give a practical presentation to play it with simple objects in the annex.

Let $(n_1, \dots, n_{k})$ be a finite sequence of positive integers such that $\sum_{i=1}^{k}n_i\ge 2$. 
At each turn, this ordered sequence is replaced by a new one in the following way: 
The player can 
\begin{itemize}
\item[(i)] Replace $n_1$ by $n_1-1$ if $n_1\ge 2$;
\item[(ii)] Replace $n_{k}$ by $n_{k}-1$ if $n_{k}\ge 2$;
\item[(iii)] Remove any $n_i$ whenever $n_i=1$;
\item[(iv)] Choose an integer $n_i\ge 3$ and replace it by any pair $(n_i', n_{i+1}')$ with $n_i'+ n_{i+1}'=n_i-1$;
\item[(v)] Replace any $(n_i, n_{i+1})$ such that $n_i+ n_{i+1}\ge3$ by $n_i+ n_{i+1}-1$.
\end{itemize}
Observe that $\sum_{i=1}^{k}n_i$ decreases by $1$ at each turn.
The game ends when $\sum_{i=1}^{k} n_i = 2$: 
White is the winner if the final sequence is $(2)$, Black is the winner if the final sequence is $(1, 1)$.

\medskip
We interpret the ordered sequences as the number of free elements (that is elements of $D\setminus S$) between pivots.
More precisely, let $k\ge1$ be such that the number of pivots before the beginning of the game is $k-1$ and let us define the initial sequence $(n_i)_{1\le i\le k}$ of positive integers by
$$
n_1:= \begin{cases}
\Big|\left\{\ell\in D\setminus S: \ell \mbox{ is smaller than the elements of the first pivot}\right\} \Big| & \mbox{ if there is a pivot}\\
|D\setminus S| & \mbox{ otherwise}.
\end{cases}
$$
For $k-1\ge i\ge 2$, let $n_i$ be the number of free elements between the $(i-1)$-th odd $S$-block and the $i$-th odd $S$-block, and let 
$$
n_{k}:= 
\Big|\left\{\ell\in D\setminus S: \ell \mbox{ is larger than the elements of the last pivot} \right\} \Big|.
$$
Observe that $\sum_{i=1}^{k}n_i=d-m=n+1\ge 2$. Moreover, the parity of $\sum_{i=1}^{k}n_i$ tells us whose turn it is: $\sum_{i=1}^{k}n_i$ is even if and only if it is Black's turn.

With this interpretation of the ordered sequences, we are in the situations~(i) and~(ii) when the player respectively plays the smallest or the largest free element.
Situation~(iii) corresponds to choosing the (unique) free element between two consecutive odd $S$-blocks, which results in putting the two $S$-blocks together to get a new odd $S$-block.
Situation~(iv) corresponds to creating a new pivot.
At last, we are in situation~(v) when a player plays at the end of an odd $S$-block, which makes it even.

If the final sequence is $(1, 1)$, we are left with one pivot and it is Black's last turn. Thus Black can choose the parity.
If the final sequence is $(2)$, this means there are no pivot, thus Black cannot change the parity anymore. But at White's last turn, the sequence was either $(3)$ or $(2, 1)$ or $(1, 2)$. In the first case, White could control the parity by attaching to the beginning or end $S$-blocks, and in both other cases, White could control the parity by playing at one of the ends of the (unique) pivot. Thus, White is the winner.

Eventually, this proves that the two games are equivalent. We will now analyze our new game to describe each player's strategy and give the condition for Black to be the winner.

\medskip
Let $(n_1, \dots, n_{k})$ be the initial finite sequence of positive integers such that $\sum_{i=1}^{k}n_i\ge 2$. 
Here is the algorithm to decide which player has a winning strategy.
Color the odd $n_i$'s either green or red in the following way: Start with a green pen and change color each time you meet an even $n_i$. 
We define $\Delta(n_1, \dots, n_{k})$ as the difference between the number of green odd $n_i$'s and the number of red odd $n_i$'s. More formally, 
\begin{eqnarray*}
\Delta(n_1, \dots, n_{k}) &:=& 
\left| \left\{ i : n_i \mbox{ is odd and } \left(\sum_{j\le i} 1_{\{n_j\mbox{  is even}\}}\right) \mbox{ is even}\right\}\right| \\
&&- \left| \left\{ i : n_i \mbox{ is odd and } \left(\sum_{j\le i} 1_{\{n_j\mbox{  is even}\}}\right) \mbox{ is odd}\right\}\right| .
\end{eqnarray*}

Example: $\Delta(3,5,2,1,7,1,4,2,1)=-2$.

Observe that $\Delta(n_1, \dots, n_{k})$ has the same parity as $\sum_1^k n_i$, therefore at White's turn $\Delta$ is odd.

\begin{theo}\label{FiniteCase2}
At White's turn, White wins if and only if $|\Delta(n_1, \dots, n_{k})|= 1$. At Black's turn, Black wins if and only if $|\Delta(n_1, \dots, n_{k})|\neq 0$.
\end{theo}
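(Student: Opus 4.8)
The plan is to prove both equivalences simultaneously by induction on $N:=\sum_{i=1}^k n_i$, using that each move decreases $N$ by $1$ and so flips the parity of $N$, hence of $\Delta$ (which always has the parity of $N$). Write $\Delta'$ for the value of $\Delta$ after a single move. The base case $N=2$ is exactly the two terminal positions: $(2)$ has $\Delta=0$ and White wins, $(1,1)$ has $\Delta=2$ and Black wins, matching the Black-turn statement. For the inductive step I unwind the definition of ``wins'': at White's turn White wins iff she has a move to a (Black-turn) position from which White still wins, which by the induction hypothesis applied to that smaller position means $|\Delta'|=0$; at Black's turn Black wins iff he has a move to a (White-turn) position from which Black wins, i.e. one with $|\Delta'|\neq 1$, hence $|\Delta'|\ge 3$. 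Thus the theorem reduces to two single-move reachability statements: (A) from a position with $\Delta$ odd one can reach $\Delta'=0$ in one move iff $|\Delta|=1$; and (B) from a position with $\Delta$ even one can reach $|\Delta'|\ge 3$ in one move iff $|\Delta|\ge 2$.

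The engine of the argument is a clean description of how one move changes $\Delta$, which I would prove first: \emph{every move changes $\Delta$ by $\pm1$, except move~(i), which replaces $\Delta$ by $1-\Delta$.} The point is that altering the parity of the first term toggles the running colour of every subsequent odd term, so move~(i) reflects $\Delta$ about $1/2$, whereas moves~(ii)--(v) touch only terms at or to the right of the edited position without altering the parity of the number of even terms seen further right, so they shift $\Delta$ by a single unit. These are short computations in the walking-colour description $\Delta=\sum_i (-1)^{e_i}\,1_{\{n_i\text{ odd}\}}$, where $e_i$ is the number of even terms before position $i$. This proposition immediately yields the ``only if'' halves: if $|\Delta|\ge3$ is odd then the unit shifts give $\Delta\pm1\neq0$ and the reflection gives $1-\Delta\neq0$, so $0$ is unreachable; and if $\Delta=0$ then every move lands in $\{-1,1\}$, so $|\Delta'|\ge3$ is unreachable.

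For the ``if'' halves I would exploit a dichotomy valid at every non-terminal position: either $n_1\ge2$, so move~(i) (the reflection $\Delta\mapsto 1-\Delta$) is available, or $n_1=1$, so removing that leading unit is available and, the first term being green, realises a $-1$ shift. Inspecting the targets, these two always-available moves settle every required case except $\Delta=-1$ (reach $0=\Delta+1$) and $\Delta=2$ (reach $3=\Delta+1$): in both the reflection and the leading-unit removal give the wrong value, so a genuine $+1$ shift is needed. The residual task is a $+1$-availability lemma asserting that from any non-terminal position with $\Delta\in\{-1,2\}$ some move achieves $\Delta'=\Delta+1$. When $\Delta=-1$ there is a red odd term, and one absorbs it (remove it if it is a unit, otherwise merge it with its left neighbour via move~(v)), each being a $+1$ shift. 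When $\Delta=2$ there are at least two green odd terms; if some green odd has size $\ge3$, splitting it into two odds via move~(iv) is a $+1$ shift.

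I expect the genuine obstacle to be the leftover case $\Delta=2$ with every green odd term equal to $1$. Here I would run a finite case analysis on the last term $n_k$ and on the parity $p$ of the number of even terms. The favourable moves are: move~(ii) at the right end (a $+1$ shift precisely when $p$ is odd, available when $n_k\ge2$); merging two adjacent even terms lying in a green region via move~(v), which creates a green odd and is a $+1$ shift; and absorbing a red odd as above when one exists. The combinatorial content is that these options cannot all fail simultaneously: if $n_k$ is even then $p$ and the colour just before $n_k$ are constrained so that either move~(ii) applies or a green adjacent pair of evens is exposed; and once all green odds are units with no red odd, some odd-indexed colour block is empty, which forces two adjacent even terms in a green region (the small-$p$ situations collapsing to an end move~(ii) or to merging the two end terms). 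Organising these sub-cases so that they are visibly exhaustive is the part needing the most care; the individual move computations are each routine.
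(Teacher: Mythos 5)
Your proposal is correct and reaches the paper's conclusion, but by a genuinely different route. Both arguments rest on the same invariant --- every move changes $|\Delta|$ by exactly one --- but the paper converts it directly into a strategy statement: for each shape of non-degenerate position it exhibits a \emph{pair} of moves steering $|\Delta|$ in both directions, then lets Black always increase and White always decrease $|\Delta|$ toward the terminal values $\Delta(2)=0$ and $\Delta(1,1)=2$. You instead refine the invariant to a signed one (moves (ii)--(v) shift $\Delta$ by $\pm1$, move (i) is the reflection $\Delta\mapsto 1-\Delta$) and run an induction on $\sum n_i$ that reduces the theorem to one-move reachability of $\Delta'=0$ (White) or $|\Delta'|\ge 3$ (Black). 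This buys you the ``only if'' halves for free and shrinks the case analysis to the two residual positions $\Delta\in\{-1,2\}$ where a genuine $+1$ shift must be produced, in place of the paper's longer list of option-pairs; the price is the endgame combinatorics you flag for $\Delta=2$ with only green unit odds and no red odd. That last case does close, and more cleanly than your sketch suggests: the two green $1$'s cut the even terms into runs of lengths $a$, $b$, $c$, and greenness forces $a$ and $b$ to be even, so either $a\ge 2$ or $b\ge 2$ exposes an adjacent pair of evens whose left member is preceded by an even number of evens (merge them via (v) for a $+1$ shift), or else $a=b=0$ and the position is $(1,1,e_1,\dots,e_c)$ with $c\ge 1$, where $c\ge 2$ again gives such a pair while $c=1$ makes move (ii) on the even last term a $+1$ shift; your appeal to the parity $p$ of the number of evens becomes unnecessary once this run decomposition is in place. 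With that paragraph written out your proof is complete, and arguably easier to audit than the paper's, whose verification of the option-pairs is largely left to the reader.
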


\begin{proof}
We let the reader check that, at each turn, $|\Delta|$ increases or decreases by $1$. 

It should be noted that if all $n_i$'s are equal to $1$, then $|\Delta(n_1, \dots, n_{k})|= k$ and the only thing a player can do is decrease the number of ones by one. So the theorem follows when $n_i=1$ for all $i$.
Otherwise, we are going to show that, if $\Delta\neq0$, the player can always determine the way $|\Delta|$ evolves by choosing between two options as follows. We first consider the situation where there exists $n_i\ge3$:
\begin{itemize}
 \item If $n_i> 3$ and $n_i$ is odd: Replace $n_i$ by $(n_j', n_{j+1}')$, (with $n_i=n_j'+ n_{j+1}'+1$), where both numbers are odd or both are even.
 \item If $n_i> 3$ and $n_i$ is even: Replace $n_i$ by $(n_j', n_{j+1}')$, (with $n_i=n_j'+ n_{j+1}'+1$), where the first number is odd and the second is even or the first is even and the second is odd.
 \item If $n_i=3$ and $i=1$ or $i=k$: Replace it by $(1,1)$ or by $2$.
 \item If $n_i=3$ and $i\not=1, k$: Replace it by $(1,1)$ or replace $(n_i, n_{i+1})$ by $2+n_{i+1}$.
\end{itemize}
It remains to analyze the situations where there are only 1's and at least one 2. 
\begin{itemize}
 \item If $n_1=2$: Replace $n_1$ by $n_1-1=1$ ($\Delta$ is replaced by $1-\Delta$), or replace $(n_1,n_2)$ by $n_1+n_2-1=n_2+1$, ($\Delta$ is replaced by $\Delta+1$). 
\item If $n_k=2$:  Replace $n_k$ by $n_k-1=1$ or replace $(n_{k-1},n_k)$ by $n_{k-1},n_k-1=n_{k-1}+1$. 
\item If there exists $(n_{i},n_{i+1})=(1,2)$ with $1\le i\le k-2$: Remove $n_{i}=1$ or replace $(n_{i+1},n_{i+2})$ by $n_{i+1}+n_{i+2}-1=n_{i+2}+1$.
\end{itemize}

Observe that $\Delta(2)=0$ and $\Delta(1, 1)=2$, and recall that White wins if and only if the final sequence is $(2)$.
Therefore, Black's strategy is to increase $|\Delta|$, which is always possible except if there are only 1's (but then Black wins). On the other hand, White's strategy consists in decreasing $|\Delta|$.
\end{proof}


\section{Infinite case: $D\subset\ZZ$}\label{infiniteZ}

If $D\subset\ZZ$ is infinite, we can assume without loss of generality that $D=\ZZ$, $D=\ZZ_+$ or $D=\ZZ_-$ because every subset of $\ZZ$ is order isomorphic to one of those three.

\begin{theo}\label{InfiniteCaseZ}
Assume $D\subset\ZZ$ is infinite.
\begin{itemize}
\item If there is exactly one pivot, then the first player wins.
\item If there is more than one pivot, then Black wins.
\item If there is no pivot and 
	\begin{list}{$\circ$}{}
	\item $D=\ZZ$, then the second player wins.
	\item $D=\ZZ_+$, then the player who wants the parity of the final sequence to be the initial parity wins.
	\item $D=\ZZ_-$, then the parity of the final sequence is determined as a function of $n$, the initial parity and the number $\ell$ of terms in the end $S$-block: 
	If $n$ is even, the final parity is the initial one plus $n/2$ (modulo 2). 
	If $n$ is odd, the final parity is the initial one plus $\ell +(n-1)/2$ (modulo 2). 
	\end{list}
\end{itemize}
\end{theo}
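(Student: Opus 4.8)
The plan is to separate the pivot cases (one pivot, more than one pivot) from the three pivot-free cases, handling the former uniformly by importing the finite analysis and the latter by a trichotomy based on which pivot-free (``safe'') moves are available. For the pivot cases I would mimic Section~\ref{Finite}. Two facts make that argument survive in the infinite setting: there is always room to manufacture a fresh pivot (drop an isolated element far below or far above everything in $S$), and in each of $\ZZ$, $\ZZ_+$, $\ZZ_-$ every pivot can be killed while its parity is controlled, since the block $\{p,\dots,q\}$ of a pivot has $p-1$ and $q+1$ free and in $D$ (the beginning and end blocks, the only exceptions, are never pivots), so the removal clause of Corollary~\ref{bla4} always applies. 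Granting this, more than one pivot is exactly as before: Black recreates a pivot faster than White can remove them and wins the final parity on his last turn by Corollary~\ref{bla4}. A single pivot splits on the parity of $n$: if $n$ is odd Black opens by creating a second pivot and reduces to the previous case, and if $n$ is even White opens by removing the pivot with Corollary~\ref{bla4}, setting both the parity and the parity-plus-end-block as he pleases, thereby handing himself a pivot-free position of the favourable type; in either case the first player wins.

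The heart of the proof is the pivot-free analysis, which I would organise around the fact that a move preserves the pivot-free property only in very restricted ways. In $\ZZ$ a pivot-free position means every $S$-block is even and the end block is empty, and any single move makes $|S|$ odd, hence creates an odd block, i.e.\ a pivot: the mover is forced to surrender control. In $\ZZ_+$ the only possible odd block is the one containing the minimum, so the safe moves are exactly those enlarging the beginning block, and since the end block is empty they leave the parity unchanged by Corollary~\ref{bla2}. In $\ZZ_-$ the only possible odd block is the end block, the safe moves are exactly those enlarging it, and each such move adds the current end-block size to the parity by Corollary~\ref{bla2}. I would also record the easy fact that a single move from a pivot-free position creates at most one pivot.

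These three pictures give the three conclusions. For $\ZZ_+$ the parity is frozen at its initial value while no pivot is present, so the player who wants the final parity to be the initial one keeps it: he plays a safe move when there is no pivot and, by Corollary~\ref{bla4}, removes any pivot the opponent creates while resetting the parity to the initial value; by Corollary~\ref{bla2} the opponent's pivot-free moves cannot change the parity either, so this player wins. For $\ZZ$ the absence of safe moves means whoever must move from a pivot-free position hands parity-control to the responder, and a short parity-of-$n$ bookkeeping (using that Black plays last) shows the second player always has the final say, hence wins. For $\ZZ_-$ I would prove by induction on the number $r$ of remaining moves that from a pivot-free position with parity $P$ and end block of size $\ell$ the outcome is forced to be $F(P,\ell,r):=P+r\ell+\binom{r}{2}\pmod 2$: a safe move sends $(P,\ell,r)$ to $(P+\ell,\ell+1,r-1)$ and one checks that $F$ is preserved, while a pivot-creating move is answered by Corollary~\ref{bla4}, whose two independent bits of control (parity, and parity-plus-end-block) let the responder remove the pivot and choose the new $(P',\ell')$ so as to land back on the same value of $F$. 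Specialising $F(P,\ell,n)$ and reducing modulo $2$ according to the parity of $n$ yields the two displayed formulas.

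The step I expect to be the main obstacle is the $\ZZ_-$ induction: one must verify that creating a pivot can never let a player escape the value $F$, which hinges on the responder re-synchronising \emph{both} the running parity and the parity of the end-block size onto the $F$-track, using the two bits of Corollary~\ref{bla4}, together with the bookkeeping that a single move from a pivot-free position produces at most one pivot so that one response always suffices. The remaining delicate point is the clean classification of safe moves in each domain, since that is exactly what separates the three answers.
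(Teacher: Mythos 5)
Your proposal is correct and follows essentially the same route as the paper: Black wins pivot positions by out-producing White's pivot-killing, the one-pivot case splits on who moves first, and the pivot-free cases are settled by classifying which moves avoid creating a pivot in $\ZZ$, $\ZZ_+$ and $\ZZ_-$ and then invoking Corollaries~\ref{bla2} and~\ref{bla4}. Your explicit invariant $F(P,\ell,r)=P+r\ell+\binom{r}{2}$ for the $\ZZ_-$ case is just a more formal bookkeeping of the paper's observation that the parity changes exactly when the end $S$-block is odd, and it checks out (it is preserved by safe moves and recoverable after a pivot is killed using both bits of control in Corollary~\ref{bla4}).
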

Observe that the only difference with the finite case when $d>m+n+1$ corresponds to the situation without pivot.
\begin{proof}
If there is a pivot on Black's turn or more than one pivot on White's turn in any infinite case, Black wins because he keeps making more pivots and White cannot kill them all.

If there is exactly one pivot on White's turn, White can remove the pivot and control the parity by Corollary~\ref{bla4}. Then White removes the pivot whenever Black creates one and always control the parity. 
If Black does not create a pivot, this means he attaches to the beginning or end $S$-block (which is possible only if $D=\ZZ_+$ or $D=\ZZ_-$) and White also attaches to this $S$-block. This possibly modifies the parity when $D=\ZZ_-$ (Corollary~\ref{bla2}). But since White knows the number of left turns and controls the parity, he wins. 

Assume now there is no pivot. 
If $D=\ZZ$, then the first player has to form a pivot and the other party can kill it while controlling the parity. Hence the second player wins. 
If $D=\ZZ_+$ or $D=\ZZ_-$ then the person to play can avoid creating a pivot only by playing at the extremity. In fact no one will dare to create a pivot because if he does the other one kills it while controlling the parity and hence will win. 
If $D=\ZZ_+$, the parity never changes. If $D=\ZZ_-$, the parity is modified each time the end $S$-block is odd. 
Hence the theorem holds. 
\end{proof}

\section{Infinite case, $D\subset \RR$}\label{infiniteR}

We define the notion of $D$-blocks by considering the equivalence relations $\sim_D$ given by:
$$
 \mbox{For any }a, b\in D,   a \sim_D b \Longleftrightarrow [a, b]\cap D \mbox{ is finite.}
$$
A \textit{$D$-block} is an equivalence class for $\sim_D$.
Note that a $D$-block can be finite or isomorphic to $\ZZ$, $\ZZ_+$ or $\ZZ_-$.

We say that a $D$-block is \textit{exhausted} whenever it is also an $S$-block.
We say a block is \textit{central} if there exist two free elements, one of which is smaller and the other one is larger than the elements of the block.

Observe that a pivot is an odd central $S$-block.
Moreover, an odd central $D$-block which is exhausted is a pivot that cannot disappear. Indeed, for every central $D$-block there must be infinitely many free points between it and any other $D$-block, because otherwise it can be extended to a larger $D$-equivalence class. 

\medskip
We are about to make clear exactly who wins when it is Black's turn. The reader can easily deduce who wins when it is White's turn because White wins if and only if he can put Black in a losing position in his next turn.

We distinguish the case where all $D$-blocks are infinite (Theorem~\ref{Th:infinite}) from the one where there is at least one finite non-exhausted $D$-block (Theorem~\ref{Th:PresenceFinite}). We begin by proving the following lemma, which is useful in both cases.

\begin{lemma}\label{Lem:mM}
Let us consider Black's turn (but not his last turn).
Assume there exist a $D$-block with a largest free element $M\not=\max (D\setminus S)$ and a $D$-block with a smallest free element $m\not=\min (D\setminus S)$. Then Black wins.
\end{lemma}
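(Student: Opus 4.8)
The plan is to reduce everything to a single goal: show that Black can force a pivot to be present at his \emph{last} turn. Once a pivot is present on Black's last turn, Corollary~\ref{bla4} lets Black control the final parity, so Black wins. Thus the content of Lemma~\ref{Lem:mM} is that the hypotheses on $m$ and $M$ give Black enough room to \emph{sustain} a pivot through the finitely many remaining turns, and the whole work goes into describing and justifying such a maintenance strategy.

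First I would analyze the local structure near $M$ and near $m$. Since $M$ is the largest free element of its $D$-block $B_M$ and $S$ is finite, $B_M$ cannot be unbounded above (that would force infinitely many occupied points above $M$); hence $B_M$ is finite or of type $\ZZ_-$, and the points of $B_M$ lying above $M$ form a finite, possibly empty $S$-block $T$ that is $\sim_S$-adjacent to the free element $M$ just below it and, because $M\neq\max(D\setminus S)$, has free elements above it. Symmetrically, near $m$ one obtains a bottom $S$-block $T'$ with $m$ free immediately above it and free elements below it. Black can therefore manufacture a pivot in either region by fixing a parity: if $|T|$ is even he plays $M$ to make the block odd (it stays central as long as a free element remains below it), and symmetrically at $T'$. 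The two regions lie in distinct $D$-blocks and, since distinct $D$-blocks are separated by infinitely many points of $D$ while $S$ is finite, they are separated by infinitely many free elements; consequently a single White move can disturb at most one of the two regions, which is the crucial independence.

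The maintenance argument then runs as follows. Black keeps a pivot alive in, say, the top region. By discreteness of $D$-blocks there is no free element $\sim_S$-adjacent to $T$ from above, so the \emph{only} free element whose play alters $T$ as an $S$-block is $M$ itself; hence White's sole way to neutralize the top pivot is to play $M$. Black answers by recreating a pivot: either again near the top, using a still-free element of $B_M$ below $M$, or exploiting the fact that once $B_M$ is exhausted and odd it is, by the observation that an exhausted odd central $D$-block is a pivot that cannot disappear, a permanent pivot; or, failing that, in the bottom region near $m$, which White has not touched. The non-extremality hypotheses $M\neq\max(D\setminus S)$ and $m\neq\min(D\setminus S)$ are what keep the repaired regions central, and the hypothesis that this is \emph{not} Black's last turn is what supplies the intervening turns in which the repairs are carried out. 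Iterating over the finitely many remaining turns, Black brings a pivot to his last turn and wins by Corollary~\ref{bla4}.

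The step I expect to be the main obstacle is the bookkeeping behind ``Black can always repair'': one must check that after \emph{any} White reply at least one of the two regions still admits the creation of an odd central $S$-block, uniformly across the possible types ($\ZZ$, $\ZZ_+$, $\ZZ_-$, finite) of $B_M$, $B_m$ and their neighbouring blocks, and across the shrinking stock of free elements. This is precisely where both hypotheses, each with its non-extremal condition, are jointly needed: if either $M=\max(D\setminus S)$ or $m=\min(D\setminus S)$ held, White could attack the surviving region from the now-open side and eventually deny Black a central odd block, so the careful case analysis confirming that one central region always survives is the heart of the proof.
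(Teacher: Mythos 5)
Your reduction of the lemma to ``Black forces a pivot to be present at his last turn'' is not correct, and this is a fatal gap rather than a bookkeeping issue. At Black's second-to-last turn with no pivot on the board, Black's single move creates at most one pivot, and in general White \emph{can} kill it: for instance if Black plays $m$, the smallest free element of a $D$-block $B_m$ with $B_m\setminus\{m\}$ still containing free elements, White answers with the $D$-successor of $m$, turning the block into an even one without creating anything new. (Concretely, take four infinite $D$-blocks of types $\ZZ_+,\ZZ_+,\ZZ_-,\ZZ_-$ with $S=\emptyset$: the lemma's hypotheses hold, no $D$-block can be exhausted, and every pivot Black can create is a singleton that White kills by playing the adjacent element of $D$.) The paper's proof acknowledges exactly this --- its parenthetical ``if this is not possible, then there is a pivot at Black's last turn'' treats the surviving pivot as the \emph{exceptional} branch --- and wins by an entirely different mechanism: the two forced lines (Black plays $m$, White must answer with its $D$-successor; Black plays $M$, White must answer with its $D$-predecessor) produce parities differing by exactly $1$, because the successor move adds the same number of inversions as $m$ did while the predecessor move adds one more than $M$ did. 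Since there is then no pivot at Black's last turn, Corollary~\ref{bla2} says his final move shifts the parity by a fixed amount wherever he plays, so the choice between $m$ and $M$ two turns earlier already decides the outcome. None of this parity-control argument appears in your proposal, and it is the heart of the lemma.

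A second, related problem: the hypotheses exclude only $M=\max(D\setminus S)$ and $m=\min(D\setminus S)$, not the ``crossed'' equalities $M=\min(D\setminus S)$ or $m=\max(D\setminus S)$. In those sub-cases the block $T\cup\{M\}$ (resp.\ $\{m\}\cup T'$) has no free element on one side and cannot be made central at all, so your pivot-manufacturing step fails outright; your parenthetical ``as long as a free element remains below it'' is precisely the unproved point. The paper devotes three of its four cases to these possibilities and uses yet other forcing arguments there (e.g.\ when $M=\min(D\setminus S)$, Black plays $M$ and it is \emph{White} who is then forced to create a pivot). Finally, the assertion that White's only way to neutralize the top pivot is to play $M$ cannot be right as stated, since $M$ has already been played by Black; White's actual resource is the free $D$-predecessor of $M$ or the unique free element below the block, and ruling these out is exactly what your argument would need but does not do.
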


\begin{proof}
If there is a pivot, Black can always create another one. Since White cannot remove more than one pivot at a time, there is a pivot at Black's last turn and Black wins, since he can choose the parity. 

Assume now there is no pivot. 
Observe that Black can always keep White busy until his second last turn by creating pivots, which White tries to remove. Therefore, we can assume we still are in this situation at Black's second last turn.

Assume $\{ m, M\}$ is disjoint from $\{\min (D\setminus S), \max (D\setminus S)\}$: By playing $m$ (respectively $M$), Blacks creates a pivot which White has to remove by playing immediately to the right  (respectively left) of the pivot (if this is not possible, then there is a pivot at Black's last turn, so Black wins). 
The parity of the permutation before Black's last turn is different according to whether Black chose to play $m$ or $M$ by Corollary~\ref{bla2}. Therefore, Black controls the final parity. 

If $m=\max (D\setminus S)$ and $M\not= \min (D\setminus S)$, one choice for Black is to play $m$, which forces White to play $\min (D\setminus S)$ (otherwise, since $D\setminus S\cup\{m\}$ has no maximum element, White creates a pivot). The other possible choice for Black is to play $M$; This creates a pivot, so White has to play immediately to the left. At this stage, the parity of the permutation does not depend on Black's choice but the parity will be different after Black's last turn, since the parity of the number of used elements in the end $S$-block is different (cf. Corollary~\ref{bla2}).

If $M=\min (D\setminus S)$ and $m\not= \max (D\setminus S)$, Black plays $M$, which forces White to create a pivot. Thus Black wins.

Assume now that $m=\max (D\setminus S)$ and $M= \min (D\setminus S)$: If Black plays $m$ (respectively $M$), then White is forced to play $M$ (respectively $m$), since any choice of a non-extremal element would create a pivot. But again, the two choices give rise to different parities at Black's last turn, and no matter where he plays, the difference of parity remains. Hence Black wins.
\end{proof}

\begin{theo}[Infinite $D$-blocks]
\label{Th:infinite}
Assume that all $D$-blocks are infinite and let us consider Black's turn (but not his last turn).
If there is a pivot, Black wins. Otherwise, we are in one of the following situations (see Figure~\ref{fig:infinite}): 
\begin{enumerate}
\item If there exist a $D$-block with a largest free element $M$ and a $D$-block with a smallest free element $m$ and 
	\begin{enumerate}
	\item one can choose $m\not=\min (D\setminus S)$ and $M\not=\max (D\setminus S)$: Black wins;
	\item $\max (D\setminus S)$ does not exist, and the only choice for $m$ is $\min (D\setminus S)$: Black wins; 
	\item $\min (D\setminus S)$ does not exist, and the only choice for $M$ is $\max (D\setminus S)$: Black wins; 
	\item $\min (D\setminus S)$ exists, one can choose $m\not=\min (D\setminus S)$, but the only choice for $M$ is $\max (D\setminus S)$: The player who wants to keep the initial parity wins;
	\item $\max (D\setminus S)$ exists, one can choose $M\not=\max (D\setminus S)$ but the only choice for $m$ is $\min (D\setminus S)$: The player who wants the final parity to be the initial one plus $\ell+(n-1)/2$ wins, where $\ell$ is the number of terms in the end $S$-block;
	\item The only choice for $(m, M)$ is $(\min (D\setminus S), \max (D\setminus S))$: White wins;
	\end{enumerate}
\item If there exist a $D$-block with a smallest free element $m$ and no $D$-block with a largest free element, then the player who wants to keep the initial parity wins;
\item If there exist a $D$-block with a largest free element $M$ and no $D$-block with a smallest free element, then the player who wants the final parity to be the initial one plus $\ell+(n-1)/2$ wins, where $\ell$ is the number of terms in the end $S$-block;
\item If none of the above is valid, then there exist no $D$-block with a smallest free element and no $D$-block with a largest free element: White wins.
\end{enumerate}
\end{theo}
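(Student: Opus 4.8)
The plan is to sort the subcases into three regimes according to how much control the player to move (Black) can exert on the final parity: either Black can realise both parities (he \emph{wins}), or the parity is pinned to a value fixed by the configuration, or it is White who can realise both parities (\emph{White wins}). The engine throughout is the forcing mechanism already used in Lemma~\ref{Lem:mM}: if Black plays a free element one of whose two $D$-neighbours already lies in $S$, he creates a singleton $S$-block which, in the configurations at hand, is a pivot, and which White is \emph{forced} to destroy by playing the other (still free) neighbour, since playing elsewhere leaves a pivot at Black's last turn and loses by Corollary~\ref{bla4}. I will call this a \emph{left-forcing} move when White is driven to fill on the right, and a \emph{right-forcing} move otherwise. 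I would first dispose of the pivot case: if a pivot is present Black wins, because every $D$-block being infinite contains infinitely many free elements at each turn, so Black can always manufacture a fresh pivot while White removes at most one per move; hence a pivot survives to Black's last turn and he sets the parity by Corollary~\ref{bla4}.

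Next I would treat the no-pivot cases in which Black wins. Subcase (1a) is exactly the hypothesis of Lemma~\ref{Lem:mM} — both $m\neq\min(D\setminus S)$ and $M\neq\max(D\setminus S)$ are attainable — so nothing more is needed. For (1b) and (1c) the lemma does not apply verbatim, because one extremum of $D\setminus S$ is missing; the idea is to re-run its two-option parity-difference argument with the absent extremal element replaced by a play in the unbounded block. In (1b), for instance, Black owns a right-forcing move coming from the $\ZZ_-$-type block carrying $M$, and the side that is unbounded above supplies a second forcing option whose forced exchange, being situated at the opposite end, has by Corollary~\ref{bla2} the opposite parity; using his several remaining moves he then steers the parity to either target. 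Subcase (1c) is the left--right mirror image.

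The remaining cases are those where Black has at most one useful (opposing) forcing option: (1d), (1e), (1f), (2), (3) and (4). Here Black cannot realise both parities, and I would reduce the parity dynamics to the semi-infinite situations already solved in Theorem~\ref{InfiniteCaseZ}. An end closed below but open above behaves like $D=\ZZ_+$ and pins the parity to its initial value, giving (1d) and (2); an end closed above but open below behaves like $D=\ZZ_-$ and pins it to the initial value plus $\ell+(n-1)/2$, giving (1e) and (3); and a configuration that is either doubly open or doubly closed with no interior forcing handle behaves like $D=\ZZ$ (respectively a finite domain), so White retains control and wins, giving (1f) and (4). The asymmetry between the two pinned formulas, namely $0$ versus $\ell+(n-1)/2$, is exactly the left--right asymmetry of the inversion count of Theorem~\ref{bla1}, and the precise value is read off by propagating Corollary~\ref{bla2} through the forced exchanges, as in the $\ZZ_-$ computation.

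I expect the main obstacle to lie in this last step: proving, for every boundary configuration, the exact number of opposing forcing options that Black possesses — that is, deciding which of the three regimes applies — and, in the pinned regime, evaluating the forced parity. This is where the open/closed status of each end, the presence or absence of interior $\ZZ_+$- and $\ZZ_-$-type blocks, and the non-symmetric contribution of the end $S$-block all have to be tracked at once; the strategic ideas are comparatively routine, while the careful matching of each geometric subcase to its regime and formula is the delicate part.
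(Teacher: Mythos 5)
Your overall architecture coincides with the paper's: pivot $\Rightarrow$ Black wins by manufacturing pivots faster than White can kill them; (1)(a) is Lemma~\ref{Lem:mM}; (1)(b)--(1)(c) are handled by a second forcing option replacing the missing extremal move; and the remaining cases split into a ``pinned parity'' regime and a ``White wins'' regime, with exactly the answers the theorem states. So the classification and the strategic engine (Corollaries~\ref{bla2} and~\ref{bla4}, forced replies to pivots) are the right ones.

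The gap is in the step you yourself flag as the obstacle, and it is a genuine one rather than routine bookkeeping. Cases (1)(d), (1)(e), (1)(f), (2), (3), (4) are not instances of Theorem~\ref{InfiniteCaseZ}: the domain there consists of several infinite $D$-blocks, and in (1)(d) for example $\min(D\setminus S)$ and $\max(D\setminus S)$ coexist with an interior handle $m\neq\min(D\setminus S)$, a configuration with no analogue among $\ZZ$, $\ZZ_+$, $\ZZ_-$. Saying it ``behaves like $\ZZ_+$'' names the answer but not the reason; what has to be proved is that Black's only non-losing moves are either an extremal element (punished because White then plays an extreme of his own choosing and, via Corollary~\ref{bla2}, fixes what the parity will be after Black's reply) or the \emph{leftmost} free element of a central $D$-block (any other pivot-creating move lets White kill the pivot while choosing the parity, Corollary~\ref{bla4}), and that the surviving line of play leaves the parity unchanged (resp.\ shifted by $\ell+(n-1)/2$). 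That deviation analysis is the actual content of the paper's proof of these cases and is absent from your plan. A smaller point: in (1)(b) your second forcing option should be the move at $m=\min(D\setminus S)$ itself --- it creates no pivot, yet White is still forced to fill immediately to its right because, $\max(D\setminus S)$ being nonexistent, every other reply creates or leaves a pivot; playing ``in the side that is unbounded above'' instead creates a pivot inside an infinite block, which White removes while controlling the parity, i.e.\ a losing move for Black.
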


\begin{proof}
If there is a pivot, Black can always create another one. Since White cannot remove more than one pivot at a time, there is a pivot at Black's last turn and Black wins. 

Assume now there is no pivot. 

Let us analyze Situations~(1)(f) and~(4): 
If Black creates a pivot, White can always remove it and choose the parity at the same time. 
The only way for Black not to create a pivot is to play $\min (D\setminus S)$ or $\max (D\setminus S)$ (if they exist). 
In this case, by Corollary~\ref{bla2} White has no control of the parity after he plays. However he can avoid creating a pivot either by playing the minimum free element or the largest free element. By choosing one of those two, he determines the parity of the number of terms in the end $S$-block. Again by Corollary~\ref{bla2}, that determines what the final parity will be after Black plays. Thus White wins.

Assume we are in Situations~(1)(a), (1)(b) or~(1)(c).
Black can always keep White busy until his second last turn by playing inside an infinite $D$-block, creating pivots which White has to remove. 
Therefore, we can assume we still are in one of these situations at Black's second last turn.

Situation~(1)(a) is a direct application of Lemma~\ref{Lem:mM}. 
In Situation~(1)(b) (respectively ~(1)(c)), Black does not create a pivot by playing $m$ (respectively $M$), but White is forced to play immediately to the right (respectively left) anyway: Indeed, since $\max (D\setminus S)$ (respectively $\min (D\setminus S)$) does not exist, White would create a pivot by choosing another element. The same argument applies and Black controls the final parity. 

Let us now turn to Situation~(2).
Black has to choose the leftmost free element of a $D$-block (otherwise, White can both remove the pivot created by Black and control the parity by Corollary~\ref{bla4}). Then White has to play immediately to the right (otherwise he would leave a pivot), but does not control the parity which is unchanged. 
In Situation~(1)(d), Black has to create a pivot (otherwise, White can choose the parity by playing $\min (D\setminus S)$ or $\max (D\setminus S)$). Since Black does not want White to control the parity when removing the pivot, he has to choose the leftmost free element of a central $D$-block (e.g. $m$). The same conclusion as in Situation~(2) follows.

Recall that $\ell$ denotes the number of terms in the end $S$-block. 
Situations~(3) and~(1)(e) are symmetrical to Situations~(2) and ~(1)(d) and the same arguments hold, but the parity is modified every second turn (at White's turn or Black's turn, depending on the parity of $\ell$).
\end{proof}

For short, let \textit{FNEDB} stand for ``finite non-exhausted $D$-block''.

\begin{theo}[Presence of a finite $D$-block]
\label{Th:PresenceFinite}
Assume there is at least one FNEDB and consider Black's turn (but not his last turn).
If there is a pivot, Black wins. Otherwise, we are in one of the following situations (see Figure~\ref{fig:finite}): 
\begin{itemize}
\item[(5)] There exists a central FNEDB: Black wins;
\item[(6)] There exist two noncentral FNEDB: Black wins;
\item[(7)] The rightmost $D$-block is the only FNEDB and 

(a) there is another $D$-block with a largest free element: Black wins;

(b) $\min (D\setminus S)$ does not exist: Black wins;

(c) none of the above, the player who wants the final parity to be the initial one wins.

\item[(8)] The leftmost $D$-block is the only FNEDB and 

(a) there is another $D$-block with a smallest free element: Black wins;

(b) $\max (D\setminus S)$ does not exist: Black wins;

(c) none of the above, the player who wants the final parity to be the initial one plus $\ell+(n-1)/2$ wins, where $\ell$ is the number of terms in the end $S$-block.
\end{itemize}
\end{theo}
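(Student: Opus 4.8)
The plan is to reduce every sub-case either to Lemma~\ref{Lem:mM} or to one of the extremity-forcing arguments already developed for infinite $D$-blocks in Theorem~\ref{Th:infinite}; the only genuinely new work lies in the parity bookkeeping of the two ``tie'' sub-cases (7c) and (8c). First I would dispose of the pivot case exactly as in the infinite analysis: if a pivot is present on Black's turn, Black repeatedly manufactures a second pivot (the infinite domain always leaves room to do so), and since White can destroy at most one pivot per turn, a pivot survives to Black's last turn, where Corollary~\ref{bla4} lets him fix the final parity. From now on assume there is no pivot.

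The key observation is that a finite non-exhausted $D$-block $B$ is bounded, hence carries both a smallest free element $m_B$ and a largest free element $M_B$, and that its position pins these down relative to $\min(D\setminus S)$ and $\max(D\setminus S)$. For a central FNEDB (Situation (5)), centrality supplies free elements on both sides of $B$, so $m_B$ and $M_B$ are each distinct from both $\min(D\setminus S)$ and $\max(D\setminus S)$; taking $m=m_B$, $M=M_B$ in Lemma~\ref{Lem:mM} gives Black's win. For two noncentral FNEDB (Situation (6)) I would first note that they cannot be noncentral on the same side --- two non-exhausted blocks each having no free element below them is impossible, since the free elements of the lower one sit below the upper one --- so one block $B_L$ is noncentral on the left and the other $B_R$ on the right, with $B_L<B_R$. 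Then $M_{B_L}\neq\max(D\setminus S)$ (there are free elements of $B_R$ above it) and $m_{B_R}\neq\min(D\setminus S)$ (free elements of $B_L$ lie below it), and Lemma~\ref{Lem:mM} applies again. Situations (7a) and (8a) are the same: in (7a) the extra block with a largest free element $M'$ lies left of the rightmost block $B_R$, so $M'\neq\max(D\setminus S)$ while $m_{B_R}\neq\min(D\setminus S)$, and Lemma~\ref{Lem:mM} closes the case; Situation (8a) is its mirror image.

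Situations (7b) and (8b) fall outside Lemma~\ref{Lem:mM}, because the unique FNEDB $B_R$ (resp. $B_L$) already carries $\max(D\setminus S)$ (resp. $\min(D\setminus S)$), so no block has a largest free element different from the maximum. Here I would instead reproduce the extremity-forcing argument of Situation (1)(c) of Theorem~\ref{Th:infinite}. After stalling White with pivot creation down to Black's second last turn, Black plays $\max(D\setminus S)$; because $\min(D\setminus S)$ does not exist, every White reply except the one immediately to the left of Black's move creates a pivot that Black would exploit, so White is forced into a parity-neutral move, after which Black's two remaining options force two different end-$S$-block parities and hence, by Corollary~\ref{bla2}, two different final parities. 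The point needing care, which I would check explicitly, is that the finiteness of $B_R$ does not break the forcing before Black has extracted his choice, i.e. that a ``top region'' still remains to be extended at the relevant turns; Situation (8b) is symmetric.

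The heart of the argument, and the step I expect to be the main obstacle, is Situations (7c) and (8c), where the outcome is a forced parity rather than an outright win for Black. In (7c) we have $\min(D\setminus S)$ existing and $B_R$ the only block with a largest free element, so once $B_R$ is eventually filled the configuration becomes bounded below and unbounded above, i.e. $\ZZ_+$-like; symmetrically (8c) becomes $\ZZ_-$-like. The plan is to show that neither player dares create a pivot (the opponent removes it and seizes the parity via Corollary~\ref{bla4}), so all play is confined to forced boundary moves, and then to track the parity through Corollary~\ref{bla2}. In the $\ZZ_+$-like case the end $S$-block stays parity-neutral, giving final parity equal to initial parity; in the $\ZZ_-$-like case each pass over the end $S$-block contributes, producing the offset $\ell+(n-1)/2$ exactly as in the $D=\ZZ_-$ computation of Theorem~\ref{InfiniteCaseZ}. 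Making this rigorous requires (i) verifying that the forced-move structure is genuinely inescapable once $B_R$ (resp. $B_L$) is exhausted, including the transitional turns while it is being emptied, and (ii) carrying the modulo-$2$ count of end-$S$-block contributions through those transitional turns so that they cancel (in (7c)) or accumulate to $\ell+(n-1)/2$ (in (8c)). This bookkeeping across the finite block is precisely what separates these cases from the purely infinite situations of Theorem~\ref{Th:infinite}, and is where I would concentrate the detailed verification.
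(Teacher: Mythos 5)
Your overall architecture is the paper's: kill the pivot case by pivot multiplication, reduce (5), (6), (7a), (8a) to Lemma~\ref{Lem:mM}, and settle the remaining sub-cases by forced-move and parity arguments. The detail you add for (6) (two noncentral FNEDBs must be noncentral on opposite sides, so each supplies the element the other needs) and for (5) and (7a) (explicit choices of $m$ and $M$) is correct and is exactly what the paper leaves implicit. But two of your steps contain genuine errors.

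In (7b) you place Black's parity choice at the wrong turn. You have Black play $\max(D\setminus S)$ at his second-last turn, White reply forced, ``after which Black's two remaining options force two different end-$S$-block parities.'' By Corollary~\ref{bla2}, at Black's last turn with no pivot present his move adds the already-determined size of the end $S$-block to the parity no matter where he plays, so he has no options there. In all of these arguments (Lemma~\ref{Lem:mM}, Situations (1)(b)--(c) of Theorem~\ref{Th:infinite}) the binary choice is exercised at Black's \emph{second-last} turn, with the two branches producing different parities entering the last turn; the paper's choice in (7b) is between the minimum and the maximum free element of the finite block.

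More seriously, your reduction of (7c) to the $D=\ZZ_+$ computation rests on a false premise. You claim that once $B_R$ is filled the position is ``$\ZZ_+$-like'' and ``the end $S$-block stays parity-neutral.'' But $B_R$ contains $\max D$, so as soon as its elements enter $S$ the end $S$-block is nonempty, and once $B_R$ is exhausted the end $S$-block is all of $B_R$, of size $|B_R|$: by Corollary~\ref{bla2} every subsequent move then adds $|B_R|$ to the parity, which flips the parity at every move whenever $|B_R|$ is odd. That is the $D=\ZZ_-$ behaviour, not the $D=\ZZ_+$ behaviour; what makes $\ZZ_+$ parity-neutral in Theorem~\ref{InfiniteCaseZ} is precisely that $D$ has no maximum and hence an empty end $S$-block, and the analogy breaks at exactly the point Corollary~\ref{bla2} cares about. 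The correct mechanism is the one the paper borrows from Situations (2) and (1)(d) of Theorem~\ref{Th:infinite}: forced Black--White pairs each contribute twice the same end-block size, hence $0$ modulo $2$, together with an analysis of what a player gains or loses by entering $B_R$ at all. You rightly flag the ``bookkeeping across the finite block'' as the crux, but the route you propose for it starts from a claim that is false whenever $|B_R|$ is odd. (In fairness, the paper's own treatment of (7c) is extremely terse --- it only discusses the case where $B_R$ contains at least $n$ free elements --- so this sub-case genuinely needs more care than either text currently gives it.)
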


\begin{proof}
Situations~(5) and~(6) are direct applications of Lemma~\ref{Lem:mM}. 
Let us turn to Situation~(7) which is more intricate. 
If there is another $D$-block with a largest free element, then Lemma~\ref{Lem:mM} applies : Black wins. 
Assume now that $\min (D\setminus S)$ does not exist. Black can create pivots that White has to remove, until it is Black's second last turn: Then, Black can choose the final parity by playing either the minimum or the maximum free element of the finite block (if there is only one point left in the block, Black plays it and White is forced to create a pivot).
Otherwise, $\min (D\setminus S)$ exists and the rightmost $D$-block is the only one with a largest free element: 
if the number of free elements in the rightmost $D$-block is at least $n$, then Black uses the same strategy as in Situation~(1)(d) of Theorem~\ref{Th:infinite}. 

Situation~(8) is symmetrical to Situation~(7), the parity being modified every second turn (at White's turn or Black's turn, depending on the parity of $\ell$).
\end{proof}

\section{Examples}

Here is a series of examples illustrating different cases, including situations where White begins.
In all these examples, assume $S$ to be empty and the number $n$ of left turns to be at least 3.

\bigskip

\begin{example}
 $D=\{1,\ldots,d\}$ and $3\le n\le d$: White wins.
\end{example}
If $d=n$ or $d>n+1$, Theorem~\ref{FiniteCase} applies. If $d=n+1$, we use Theorem~\ref{FiniteCase2}, with the sequence $(n_i)$ reduced to $(d)$. We have to distinguish the cases $d$ odd and $d$ even. If $d$ is odd, it is White's turn, and $\Delta(d)=1$. If $d$ is even, it is Black's turn and $\Delta(d)=0$.
In any case, White wins.

\bigskip

\begin{example}
 $D$ contains an interval with nonempty interior: Black wins.
\end{example}
Each point in the interior of the interval is an FNEDB, hence we are in situation~(5) of Theorem~\ref{Th:PresenceFinite}.

\bigskip

\begin{example}
 $D=\{2-1/k,\ k\ge1\}\cup\{3-1/k,\ k\ge1\}$: The person who wants the parity to be even wins.
\end{example}
The domain is constituted of 2 $D$-blocks which are isomorphic to $\ZZ_+$. If Black begins, we are in situation~(2) of Theorem~\ref{Th:infinite}, hence initial parity (which is even since $S$ is empty) wins. If White begins, since he cannot create a pivot, he has to play the minimum of the domain and we are left with a similar situation at Black's turn (with $S$ reduced to a singleton, the parity remains even).

\bigskip

\begin{example}
 $D=\{2+1/k,\ k\ge1\}\cup\{3+1/k,\ k\ge1\}$: The person who wants the parity to be $[n/2]$ wins.
\end{example}
The domain is constituted of 2 $D$-blocks which are isomorphic to $\ZZ_-$. If Black begins, $n$ is odd and we are in situation~(3) of Theorem~\ref{Th:infinite}: The winner is the player who wants the parity to be $(n-1)/2=[n/2]$ (there are no term in the end $S$-block). If White begins, since he cannot create a pivot, he has to play the maximum of the domain and we are left with a similar situation at Black's turn, with one element in the end $S$-block. By (3) of Theorem~\ref{Th:infinite}, the winner is the player who wants the parity to be $1+(n-2)/2=[n/2]$.

\bigskip

\begin{example}
 $D=\{2-1/k,\ k\ge1\}\cup\{2+1/k,\ k\ge1\}$: White wins.
\end{example}
The domain is constituted of 2 $D$-blocks which are isomorphic to $\ZZ_+$ and $\ZZ_-$. By~(1)(f) of Theorem~\ref{Th:infinite}, if Black begins then White wins. But if White begins, he can play the minimum of $D$ and leave Black with a losing situation.

\bigskip

\begin{example}
 $D=\{2+1/k,\ k\ge1\}\cup\{5-1/k,\ k\ge1\}$: Second player wins.
\end{example}
There is only one $D$-block which is isomorphic to $\ZZ$: Theorem~\ref{InfiniteCaseZ} applies.

\bigskip

\begin{example}
 $D=\{1\}\cup\{2+1/k,\ k\ge1\}\cup\{5-1/k,\ k\ge1\}$: First player wins.
\end{example}
First player plays 1, and we are left with the previous example.

\bigskip

\begin{example}
 $D=\{1,2\}\cup\{2+1/k,\ k\ge1\}\cup\{5-1/k,\ k\ge1\}$: Black wins.
\end{example}
There are 2 $D$-blocks, the first one containing 2 elements and the second one isomorphic to $\ZZ$. If Black begins, we can apply~(8)(b) of Theorem~\ref{Th:PresenceFinite}, and Black wins. If White begins, he is forced to play 1 in order not to create a pivot and we are left with the previous example.


\section*{Annex: Children's games}\label{annex}
In this section, we propose two simple two-player games equivalent to the transposition game in the finite case with $d=m+n+1$ (see Section~\ref{Sec:finite_d=m+n+1}).

\subsection*{Playing the game with pennies}
Start with a finite number of pennies, split into clumps disposed along a line. Black is the player when the number of pennies is even. 
At each turn, the player can 
\begin{itemize}
\item Remove a penny in the first or in the last clump;
\item Remove a clump reduced to one penny; 
\item Remove one penny from a clump containing more than 3 pennies and split the rest into two nonempty clumps;
\item Merge two adjacent clumps and remove one penny in the resulting clump.
\end{itemize}
The game ends when there are two pennies left. 
White is the winner if there is only one clump, Black is the winner if there are two clumps at the end.

\medskip
The above game is a direct interpretation of the transposition game in the finite case with $d=m+n+1$ (see Section~\ref{Sec:finite_d=m+n+1}): Let $n+1\ge 2$ be the number of pennies. 
For any $1\le i\le k$, let $n_i\ge 1$ be the number of pennies into the $i$-th clump : $\sum_{i=1}^{k}n_i = n+1 \ge 2$.
Each possible action of one player modifies the $(n_i)$'s as in Section~\ref{Sec:finite_d=m+n+1}.

\subsection*{Playing the game with black and white pieces}
Start with a finite number of black and white pieces, disposed along a line.  
In this setting, to be consistent with the rest of the paper, Black is the player when the number of pieces is odd. 
Each player may at its turn, independently of its own color, do one of the following:
\begin{itemize}
\item remove one black piece;
\item replace two consecutive white pieces with a black one;
\item remove one extreme piece (the leftmost or the rightmost, whatever its color).
\end{itemize}
Observe that at each step, the total number of pieces decreases by one. 
If the last piece is white, then White wins, else Black wins. An example of this game is shown on Figure~\ref{fig:pieces}.

\medskip
The above game is equivalent to the transposition game in the finite case with $d=m+n+1$ (see Section~\ref{Sec:finite_d=m+n+1}). 
Let $n$ be the number of pieces and let $k\in\{1, \dots, n+1\}$ be the number of black pieces plus one at the beginning of the game. 
Put a bar after each black piece.
Let $n_1\ge1$ be the number of pieces before the first bar. 
For $2\le i\le k-1$, let $n_i\ge1$ be the number of pieces between the $(i-1)$-th bar and the $i$-th bar. 
Let $n_k\ge1$ be the number of pieces after the last bar (the $k$-th bar) plus one.
Observe that $\sum_{i=1}^k n_i = n+1\ge 2$.

Let us analyze how the action of one player modifies the $(n_i)$'s: 
Removing the $i$-th black piece corresponds to removing $n_i$ (if $n_i=1$) or replacing $(n_i, n_{i+1})$ by $n_i+ n_{i+1}-1$ (options (iii) and (v) of Section~\ref{Sec:finite_d=m+n+1}); 
Replacing two consecutive white pieces with a black one corresponds to replacing some $n_i\ge 3$ by $(n_i', n_{i+1}')$ with $n_i'+ n_{i+1}'=n_i-1$ (option (iv) of Section~\ref{Sec:finite_d=m+n+1}); 
Removing one extreme white piece decrements $n_1$ or $n_{k}$ by one (options (i) and (ii) of Section~\ref{Sec:finite_d=m+n+1}); 
Removing one extreme black piece suppresses $n_1$ or $n_{k}$, which was equal to one (option (iii) of Section~\ref{Sec:finite_d=m+n+1}).
After $n-1$ steps, if the last piece is Black (respectively White), then $k=2$ and $(n_1, n_2)=(1, 1)$ (respectively $k=1$ and $n_1=2$) and Black (respectively White) wins.

\begin{figure}[p]
\input{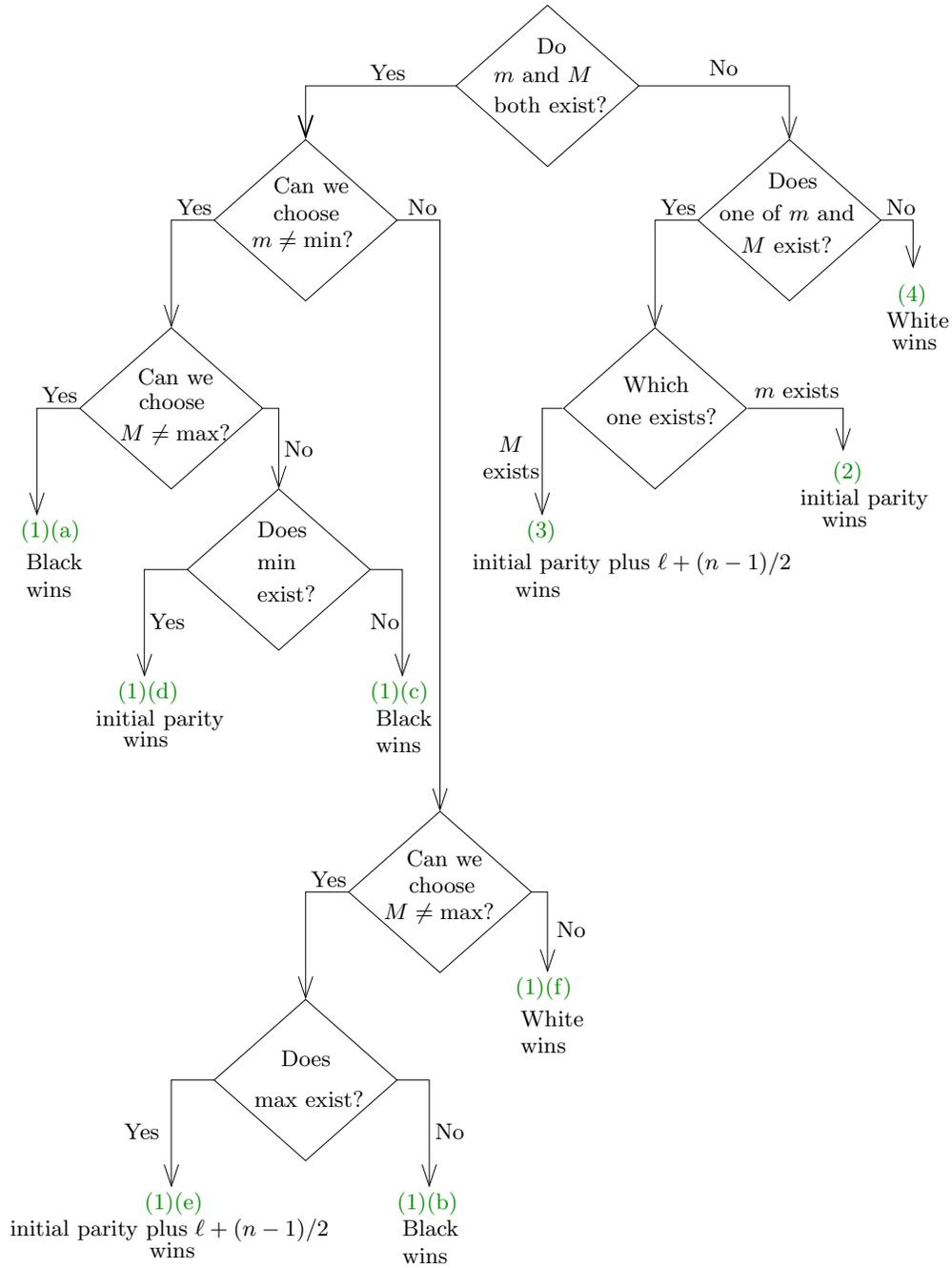}
\caption{(Infinite $D$-blocks.) Consider Black's turn (but not his last turn). Assume that all $D$-blocks are infinite and that there is no pivot. Whenever it is possible, $m$ (respectively $M$) is to be chosen among the smallest (respectively largest) free elements of $D$-blocks. Recall that we denote by $n$ the number of turns and by $\ell$ the number of terms in the end $S$-block. 
For short, $\min$ and $\max$ denote $\min (D\setminus S)$ and $\max (D\setminus S)$.}
 \label{fig:infinite}
\end{figure}

\begin{figure}[p]
\input{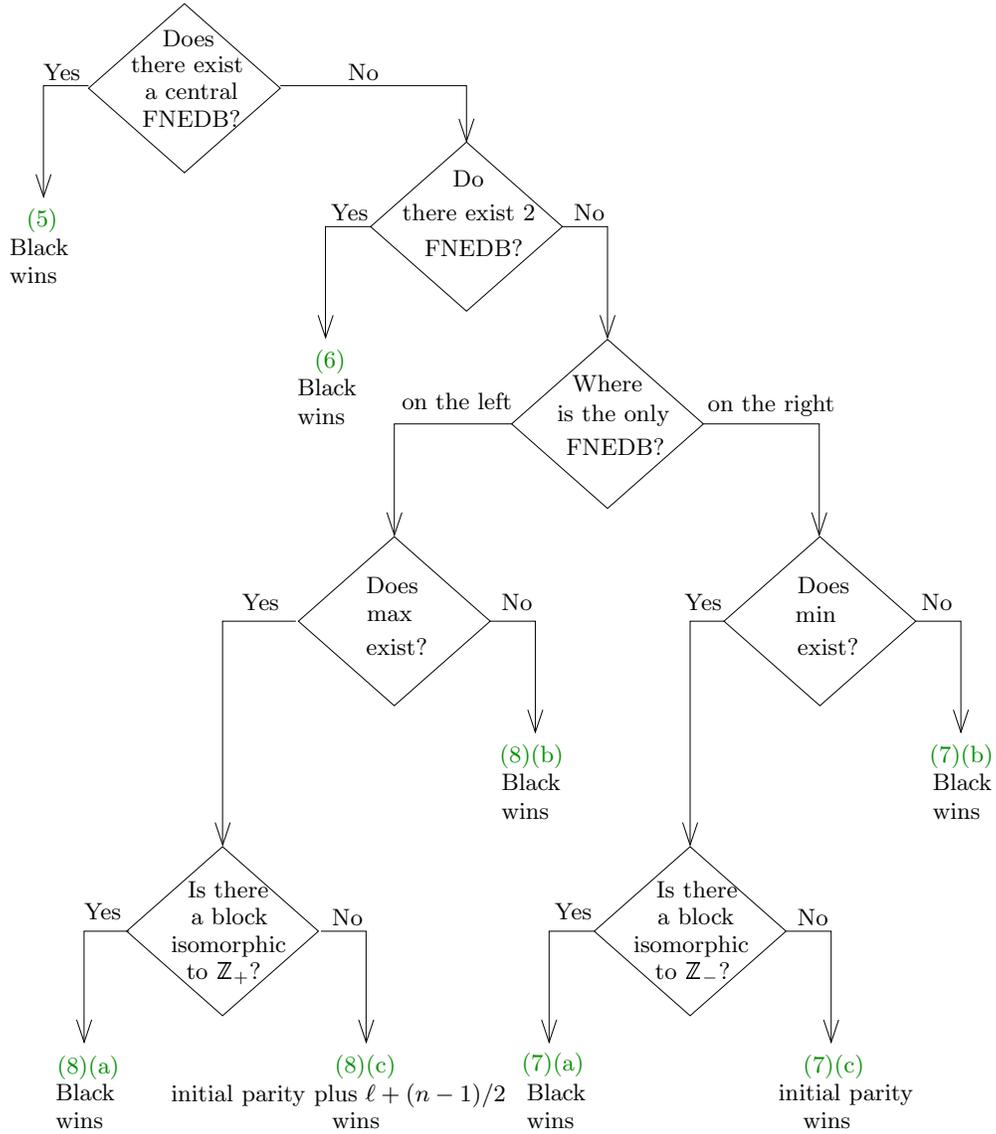}
\caption{(Presence of a finite $D$-block.) Consider Black's turn (but not his last turn). Assume there is at least one FNEDB and that there is no pivot. Notations are the same as in Figure~\ref{fig:infinite}.}
\label{fig:finite}
\end{figure}

\begin{figure}
\label{fig:pieces}
\input{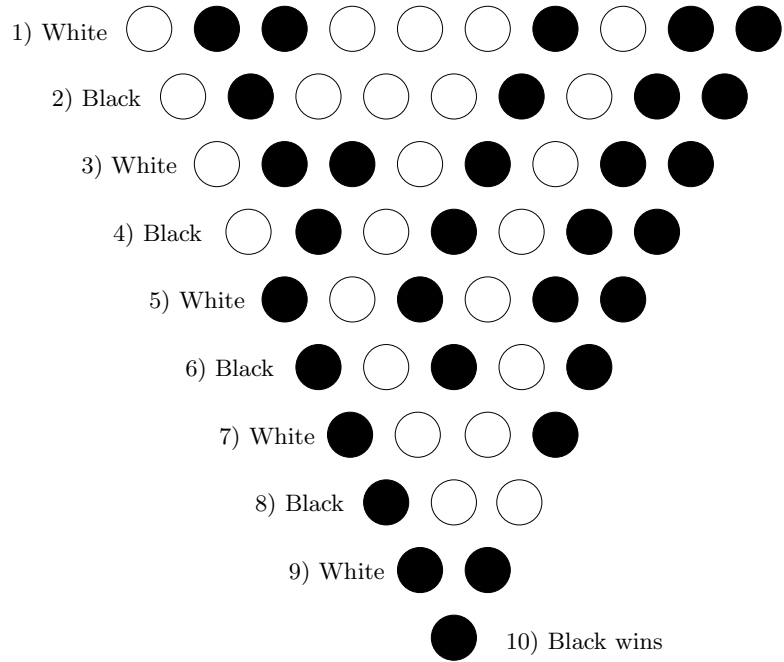}
\caption{Example of game with $n=10$ (first player is White).}
\end{figure}

\bibliography{game}

\end{document}